\renewcommand{\Re}{\mathbb{R}}
\renewcommand{\d}{\displaystyle}
\newcommand{\BB}{{\sf B}}
\newcommand{\CC}{{\sf C}}
\newcommand{\FF}{{\sf F}}
\newcommand{\HH}{{\sf H}}
\newcommand{\GG}{{\sf G}}
\newcommand{\II}{{\sf I}}
\newcommand{\LL}{{\sf L}}
\newcommand{\MM}{{\sf M}}
\newcommand{\PiR}{{\sf \Pi_R}}
\newcommand{\GR}{{\sf G_R}}
\newtheorem{lemma}{Lemma}
\newtheorem{prop}{Proposition}
\newtheorem{theorem}{Theorem}
\newtheorem{corollary}{Corollary}
\def\s{{\sf s}}
\def\LL{{\sf L}}
\renewcommand{\d}{\displaystyle}
\begin{document}

\begin{frontmatter}

%% Title, authors and addresses

%% use the tnoteref command within \title for footnotes;
%% use the tnotetext command for the associated footnote;
%% use the fnref command within \author or \address for footnotes;
%% use the fntext command for the associated footnote;
%% use the corref command within \author for corresponding author footnotes;
%% use the cortext command for the associated footnote;
%% use the ead command for the email address,
%% and the form \ead[url] for the home page:
%%
%% \title{Title\tnoteref{label1}}
%% \tnotetext[label1]{}
%% \author{Name\corref{cor1}\fnref{label2}}
%% \ead{email address}
%% \ead[url]{home page}
%% \fntext[label2]{}
%% \cortext[cor1]{}
%% \address{Address\fnref{label3}}
%% \fntext[label3]{}

\title{The Group Inverse of extended Symmetric and
Periodic Jacobi Matrices}

%% use optional labels to link authors explicitly to addresses:
%% \author[label1,label2]{<author name>}
%% \address[label1]{<address>}
%% \address[label2]{<address>}

\author{S. Gago}

\address{}

\begin{abstract}
In this work, we explicitly compute the group inverse of symmetric and periodic Jacobi matrices with constant elements that have been extended by adding a row and a column conveniently defined. For this purpose, we interpret such matrices as the combinatorial Laplacian of a non--complete wheel that has been obtained by adding  a vertex to a cycle and some edges conveniently chosen. The obtained group inverse is an incomplete block  matrix with a block Toeplitz structure. In addition, we obtain the effective resistances and the Kirchhoff index of non--complete wheels. 
\end{abstract}
\begin{keyword}
%% keywords here, in the form: keyword \sep keyword
Discrete Elliptic Operators \sep Group inverse \sep Effective Resistances  \sep Kirchhoff index
%% MSC codes here, in the form: \MSC code \sep code
%% or \MSC[2008] code \sep code (2000 is the default)

\end{keyword}

\end{frontmatter}

%%%%%%%%%%%%%%%%%%%%%%%

%%%%%%%%%%%%%%%%%%%%%%%%%%%%%%%%%
%%%%%%%%%%%%%%%%%%%%%%%%%%%%%%%%%
\section{Introduction and notation}
\label{}
%%%%%%%%%%%%%%%%%%%%%%%%%%%%%%%%%
%%%%%%%%%%%%%%%%%%%%%%%%%%%%%%%%%

The invertibility of nonsingular tridiagonal or block tridiagonal matrices has been  studied in recent years; see for instance \cite{BG03, BHR14, LHLL10, M92}. Moreover, explicit inverses are known  in some cases, for instance when the tridiagonal matrix is symmetric with constant diagonals and subject to some restrictions. In \cite{FP01}, da Fonseca and Petronilho obtained explicit inverses of tridiagonal $2$--Toeplitz and $3$--Toeplitz matrices which generalize
some well-known results concerning the inverse of a tridiagonal Toeplitz matrix. The techniques used in the mentioned results are mainly based on  the theory of orthogonal polynomials.   For the singular case, there is also a big amount of work, for instance in \cite{BCEM10} the authors carried out an exhaustive analysis of the genera\-lized inverses of singular irreducible symmetric $M$--matrices. The key idea of the approach was to identify any symmetric $M$--matrix with a positive semi--definite Schr\"odinger operator on a connected network whose conductances are given by the off--diagonal elements of the $M$--matrix. Moreover, explicit expressions for the group inverses in the cases of tridiagonal matrices and some circulant matrices were obtained in \cite{CEGJM15}.

In this work, we present a new formula  for the group inverse of an extended symmetric and periodic Jacobi matrix with constant elements. By extended we mean that a row and a column have been added to the matrix. The idea is to see the extended symmetric and periodic Jacobi matrix as the combinatorial Laplacian of a non--complete wheel and hence to obtain the group inverse, by using some previous results obtained in \cite{CEGM16}.  A non--complete wheel is a wheel where the central vertex is connected to a few vertices of the cycle. This kind of networks has many applications in Computer Science as the central vertex is called a hub (see \cite{BJR13}). Therefore, the non--complete wheel can be seen as a cycle with an added vertex and some new edges and hence, the result of \cite{CEGM16} can be applied. Actually, a particular example of this network can be found in \cite{CEGM16}. Finally, we use the formula for the group inverse to give the effective resistances and the Kirchhoff Index of a non--complete wheel.

In the following, the triple $\Gamma=(V,E,c)$ denotes a finite network, that is, a finite graph without loops nor multiple edges, with vertex set $V=\{1,\dots,n\}$ and edge set $E$, where each edge $e_{ij}=\{i,j\}$ has associated a conductance $c_{ij}>0$. The standard inner product on $\Re^n$ is denoted by $\langle \cdot, \cdot \rangle$, thus, if ${\sf u}$, ${\sf v}\in \Re^n$, then $\langle {\sf u}, {\sf v}\rangle=\sum_{k=1}^n u_k v_k $. For any $i=1,\dots,n$ we denote by ${\sf e}_i$ the $i$--th vector of the standard basis of $\Re^n$, by ${\sf j}_n$ the all--ones vector of dimension $n$ and by ${\sf J}_{n,m}$ the all--ones matrix of size $(n,m)$. Moreover $T_n(q)$, $U_n(q)$ and $V_n(q)$ denote respectively the $n$-th Chebyshev polynomials of first, second and third kind, that is, the Chebyshev polynomials satisfying $T_0(q)=U_0(q)=V_0(q)=1$, $T_1(q)=q$, $U_1(q)=2q$ and $V_1(q)=2q-1$, for any $n\in \mathbb{N}$.

A matrix is called a {\it block Toeplitz matrix} iff it is  a block matrix, which contains blocks that are repeated in each descending diagonal from left to right, as a Toeplitz matrix in which each descending diagonal from left to right is constant. A matrix ${\sf A}$ of order $n=md+p$, $0<p<d$, is called {\it incomplete block matrix} if  it is partitioned from the top--left--hand corner using
$d\times d$ submatrices as far as possible. Thus
$$
{\sf A}=\left[\begin{matrix}{\sf A}_{11}&\cdots &{\sf A}_{1m} & {\sf A}_{1m+1}\\[2ex]
\vdots&\ddots &\vdots & \vdots\\[2ex]
{\sf A}_{m1}&\cdots &{\sf A}_{mm} & {\sf A}_{mm+1}\\[2ex]
{\sf A}_{m+11}&\cdots &{\sf A}_{m+1m} & {\sf A}_{m+1m+1}\end{matrix}\right],$$
where ${\sf A}_{ij}$, $i,j=1,2,\ldots,m$ is an $d\times d$ matrix, ${\sf A}_{im+1}$ and ${\sf A}^\top_{m+1 j}$, $i,j=1,2,\ldots,m$ are $d\times p$ matrices and ${\sf A}_{m+1m+1}$ is an $p\times p$ matrix.

The combinatorial Laplacian or simply the Laplacian of the network $\Gamma$ is the matrix ${\sf L}=({\sf L}_{ij})$, where ${\sf L}_{ii}=\sum_{k=1}^n c_{ik}$, and ${\sf L}_{ij}=-c_{ij}$ when $i\neq j$. It is well--known that the Laplacian is singular, symmetric and positive semi--definite. Moreover ${\sf L}{\sf u}=0$ iff ${\sf u}$ is proportional to vector ${\sf j}_n$. The group inverse of the Laplacian, $\LL^{\#}$, is known as the Green matrix of the network $\Gamma$, and from now on will be denoted by ${\sf G}$.

For every pair of vertices $\{i,j\}$ we define the dipole between them as the vector ${\sf \pi}_{ij}={\sf e}_i-{\sf e}_j$. Observe that ${\sf \pi}_{ii}=0$ and  ${\sf \pi}_{ij}=-{\sf \pi}_{ji}$.
The \textit{effective resistance}  between two vertices $\{i,j\}$ of a network $\Gamma$ can be computed by using the following formula of \cite{BCEG09}:
\begin{equation}
\label{resistance}
R(i,j)=\langle {\sf G} {\sf \pi}_{ij},  {\sf \pi}_{ij}\rangle= ({\sf G})_{ii}+({\sf G})_{jj}-2({\sf G})_{ij},
\end{equation}
and the total resistance of the network or \textit{Kirchhoff index} can be also computed as follows:
$$K(\Gamma)=\frac{1}{2}\sum_{i,j\in V} R(i,j) = n \sum_{i=1}^n  ({\sf G})_{ii}.$$

Given $m>1$, $d\geq 1$, we consider a cycle $C_{n}$ on $n=md$ vertices 
with constant conductances $c=c(i,i+1)>0$ for any $i=1,\dots,n-1$ and $c=c(n,1)$. A \textit{non--complete wheel  $(n,m)$-$W$,} is a network obtained from $C_n$ by adding a  new vertex $n+1$ to $m$ of the vertices of the cycle placed at the same distance, $d$, with new conductances $a=c(n+1,1+d(i-1))$, for any $i=1,\ldots,m$.

It is known (see for instance \cite{BCEM10}) that the Laplacian matrix of the cycle  $C_{n}$, is a circulant matrix ${\sf L}=circ(2c,-c,0,\dots,0,-c)$ and its group inverse is
$$\left({\sf G}\right)_{ij}=\frac{1}{12cn}\left(n^2-1-6|i-j|(n-|i-j|)\right),\quad i,j=1,\dots,n.$$

\section{The group inverse of non--complete wheels}

In this section we give an explicit expression for the group inverse of extended symmetric and
periodic Jacobi matrices. Not surprisingly, the expression is an incomplete block Toeplitz matrix whose coefficients involve Chebyshev polinomials.  In order to obtain the claimed expression we consider the Laplacian matrix of the non--complete wheel  network $(n,m)$-$W$  in terms of the Laplacian of the base cycle. Then, we obtain¡ the matrix of order $n+1$ given by
$$
 {\sf L'}=
 \begin {bmatrix} {\sf L}+{\sf D}& {\sf s}\\[1ex]
 {\sf s}^\top & \alpha\\[1ex]
 \end {bmatrix},
 $$
where ${\sf D}$ is a diagonal matrix whose non null elements, $a$, are placed at the ($1+d(i-1)$)--elements of the diagonal, $i=1,\ldots,m$, ${\sf s}=-a\sum\limits_{i=1}^m {\sf e}_{1+d(i-1)}$ and $\alpha = ma$.

\begin{theorem}\label{theo.Main}
The group inverse of $\LL'$ is
$$
 ({\sf L}')^{\#}=
\left(
\begin{array}{cc}
{\sf L}'_{11}
&{\sf L}'_{12}\\[1ex]
 ({\sf L}'_{12})^\top
&{\sf L}'_{22}
\end{array}
\right),
$$
where
$$\begin{array}{rl}
\hspace{-.25cm}\displaystyle {\sf L}'_{22}=&
\hspace{-.25cm}\displaystyle
\frac{12cdn+an(d^2-1)}{12ac (n+1)^2},
 \\[3ex]
\hspace{-.25cm}\displaystyle{\sf L}'_{12}=
&\displaystyle{\sf v}\otimes {\sf j}_m,
 \end{array}
$$
with
$$
({\sf v})_{i}=\frac{a(d+1)[n(d+5)+6]-12cd}{12ac(n+1)^2}+\frac{i(i-d-2)}{2c(n+1)}, \; \textrm{for any}\;\; i=1,\dots,d,
 $$
and where ${\sf L}'_{11}$ is  the block Toeplitz matrix
$$
{\sf L}'_{11}=
\begin {bmatrix}{\sf N}_1& {\sf N}_2  & \dots & {\sf N}_m \\[1ex]
 {\sf N}_m & {\sf N}_1 & \ddots& \vdots\\[1ex]
\vdots & \ddots &\ddots&  {\sf N}_2  \\[1ex]
{\sf N}_2 & \dots & {\sf N}_m  & {\sf N}_1
 \end {bmatrix}$$
such that  any submatrix ${\sf N}_k,$ $k=1,\dots,m$, has entries 
$$\begin{array}{rl}
\hspace{-.25cm}({\sf N}_k)_{i\,j}=
&\hspace{-.25cm}\displaystyle \dfrac{1}{2cn}\Bigg(-\left| d(k-1)+j-i\right|\left(n-\left|  d(k-1)+j-i\right|\right)
\\[3ex]
+&\hspace{-.25cm}\displaystyle\dfrac{1}{T_m(q)-1}
\bigg[ n (j-i)  \left( V_{k-1}(q)  -V_{m-k}(q) \right)
\\[3ex]
\hspace{-.25cm}\displaystyle- &\hspace{-.25cm}\displaystyle
  \Big(\frac{an}{c} ( i-1) ( j-1-d) -nd \Big)  \left( U_{k-2}(q) +U_{m-k}(q) \right)\bigg] 
 \\[3ex]
 \hspace{-.25cm}\displaystyle +&\hspace{-.25cm}\displaystyle
\frac{1}{n+1} \left[ (n(d-2)+2d-j )j+ (d+2-i)i\right]
 \\[3ex]
 \hspace{-.25cm}\displaystyle+ &\hspace{-.25cm}\displaystyle
 (n+2i-2kd)j-(n+2+3d-2kd)i 
 \\[2ex]
  \hspace{-.25cm}\displaystyle +&\hspace{-.25cm}\displaystyle
 d(k-1)\left[n-d(k-1)\right]-\frac{2cd}{a}
  \\[3ex]
 \hspace{-.25cm}\displaystyle +&\hspace{-.25cm}\displaystyle
 \frac{a(d^2-1)+12cd}{6a(n+1)^2}+\frac{n(d+11)(d+1)-d^2+1}{6(n+1)}
 \Bigg),
  \end{array}$$
with $i, j=1,\dots,d$ and  $q=\dfrac{ad}{2c}+1$.
\end{theorem}
\begin{proof}

In order to obtain $({\sf L}')^{\#}$, we use  \cite[Corollary 1]{CEGM16} which reads:
$$
 ({\sf L}')^{\#}=\frac{n^2}{\alpha^2(1+n)^2}
\left(
\begin{array}{cc}
\CC\MM\CC^{\top}+\frac{\alpha}{n^2} {\sf J}_n
& -\frac{\alpha}{n}\,{\sf j}_n-\CC\MM\s\\[1ex]
 -\frac{\alpha}{n}\,{\sf j}_n^{\top}-\s^{\top}\MM\CC^{\top}
&\alpha+\s^{\top}\MM\s
\end{array}
\right),
$$
where
$$\CC= \dfrac{1}{n}\left( \alpha(n+1) \II_n+{\sf j}_n\s^{\top}\right),$$
 and ${\sf M}=\GG-\GG{\sf \Pi}(\II+{\sf \Pi}^\top \GG {\sf \Pi})^{-1} {\sf \Pi}^\top \GG$, where the elements of 
 ${\sf \Pi}\in \mathcal{M}_{n,\frac{m(m+1)}{2}}$ for any $k=1,\dots,m$, $h=k,\dots,m,$ are
 $$
 ({\sf \Pi})_{ij}=
   \sqrt{\frac{a}{m}}
   \cdot
 \left\{
 \begin{array}{rll}
1& \hbox{if}\quad i=1+(k-1)d,& j=(2m-k)(k-1)/2+h,\\[1ex]
 - 1& \hbox{if}\quad i=1+dh,& j=(2m-k)(k-1)/2+h,\\[1ex]
 0 & &\hbox{otherwise.}
 \end{array}
 \right.
 $$
%   ${\sf \Pi}$ the matrix whose columns are the dipoles
%$$\sqrt{\frac{a}{m}}{\sf \pi}_{1+(i-1)d\;\;1+(j-1)d}, \quad \textrm{for any} \quad1\leq i<j\leq m.$$
So, to obtain the result we basically need  to compute matrix
 $${\sf M}=\GG-\GG{\sf \Pi}(\II+{\sf \Pi}^\top \GG {\sf \Pi})^{-1} {\sf \Pi}^\top \GG.$$
To do this we need to calculate the different involved matrices and their products in different steps. For the sake of readability, we  develop the proofs in Section 4, throughout some technical lemmas. 
\begin{enumerate}[i) ]
\item In order to obtain $\BB=(\II+{\sf \Pi}^\top \GG{\sf \Pi})^{-1},$ we use a reduced form
$${\sf \Pi}^\top\GG {\sf \Pi}= \PiR^\top \GR \PiR,$$
where ${\sf G_{R}}$ is the $m\times m$ submatrix of ${\sf G}-\frac{n^2-1}{12cn}{\sf J}_n$ whose rows and columns are placed at $1+(k-1)d$, for any $k=1,\dots,m$, and where ${\sf \Pi_R}$ is the submatrix of non-zero elements of ${\sf \Pi}$, that is, ${\sf \Pi_R}\in \mathcal{M}_{m+1,\frac{m(m+1)}{2}}$ and for any $k=1,\dots,m$, $h=k,\dots,m,$ its elements are
 $$
 (\PiR)_{ij}=
   \sqrt{\frac{a}{m}}
   \cdot
  \left\{
 \begin{array}{rll}
1& \hbox{if}\quad i=k,& j=(2m-k)(k-1)/2+h,\\[1ex]
 - 1& \hbox{if}\quad i=1+h,& j=(2m-k)(k-1)/2+h,\\[1ex]
 0 && \hbox{otherwise.}
 \end{array}
 \right.
  $$
\item Now we apply Woodbury's formula, see \cite{Wo50}, to get 
$${\sf B}=(\II+\PiR^\top \GR \PiR)^{-1}=\II-\PiR^\top(\GR^{-1}+\PiR\PiR^\top)^{-1}\PiR.$$
Thus, ${\sf M}$ becomes
\begin{eqnarray*}
{\sf M}&=&\GG-\GG{\sf \Pi}[\II-\PiR^\top(\GR^{-1}+\PiR\PiR^\top)^{-1}\PiR]{\sf \Pi}^\top \GG\\
&=&\GG-\GG[{\sf \Pi}{\sf \Pi}^\top-{\sf \Pi}\PiR^\top(\GR^{-1}+\PiR\PiR^\top)^{-1}\PiR{\sf \Pi}^\top]\GG \\
&=&\GG-\GG[{\sf \Pi}{\sf \Pi}^\top-{\sf \Pi}\PiR^\top{\sf M_R}\PiR{\sf \Pi}^\top]\GG,
\end{eqnarray*}
where ${\sf M_R}=(\GR^{-1}+\PiR\PiR^\top)^{-1}$. \\[1ex]
 Observe that both ${\sf G_R}$ and $ {\sf \Pi_R}^{\intercal}{\sf \Pi_R}$ are, respectively,  the circulant matrices
$${\sf G_R}=-\frac{d}{2cm}\mbox{\rm circ}(0,m-1,\ldots,j(m-j),\ldots,m-1),$$
$$ {\sf \Pi_R}^{\intercal}{\sf \Pi_R} =\dfrac{a}{m}\Big(m{\sf I}_m-{\sf J}_m\Big)=\dfrac{a}{m}\mbox{\rm circ}\left(m-1,-1,\ldots,-1\right).$$
Thus, for computing ${\sf G_R}^{-1}$ and ${\sf G_R}^{-1}+ {\sf \Pi_R}^{\intercal}{\sf \Pi_R}$, we use Lemma \ref{lemma.GR-1}.
\item Now, from \cite[Theorem 3.5]{CEGJM15}, we obtain $\sf{M_R}$  in Lemma \ref{lemma.MR}.
\item We next consider    ${\sf F}={\sf \Pi}{\sf \Pi}^\top-{\sf \Pi}\PiR^\top{\sf M_R}\PiR{\sf \Pi}^\top$.\ Notice that ${\sf F}$ is a matrix which non-zero elements are the circulant submatrix  $$\frac{a}{m}\mbox{\rm circ}\left(m-1,-1,\dots,-1\right).$$ It turns out that, ${\sf F}$ is the block Toeplitz matrix described in Lemma \ref{lemma.F}.
\item Since, $\MM=\GG-\GG \FF \GG$, we first compute  matrix $\HH=\GG\FF$, see Lemma \ref{lemma.H}.
\item Next, we compute matrix ${\sf K}=\GG \FF \GG=\HH\GG$ in Lemma \ref{lemma.K}. 
\item  Again, $\MM$ is a block Toeplitz matrix whose expression is obtained in Proposition \ref{Prop.M}.
\item Finally, using Lemma \ref{LemmaSG}, the claimed result of Theorem \ref{theo.Main} follows.
\end{enumerate}

\end{proof}
%

%%%%%%%%%%%%%%%%%%%%%%%%%%%%%%
%%%%%%%%%%%%%%%%%%%%%%%%%%%%%%
%%%%%%%%%%%%%%%%%%%%%%%%%%%%%%
\section{Effective resistances and Kirchhoff index for non--complete wheels}

The Green matrix is a fundamental tool for computing some desired parameters of the network, like the effective resistances of the network or the Kirchhoff index, very useful in electric circuit theory or in organic chemistry, as natural indexes describing important structural properties of circuits or molecules.
Therefore, once we compute the Green matrix, the effective resistances between any two vertices of the new network are easily obtained from Formula \eqref{resistance} obtained in \cite{BCEG09}.

\begin{prop}\label{resistances}
Given two vertices of $\Gamma'$, the effective resistances of the network between them are:
\begin{itemize}
\item[a)] if $i,j\neq n+1$, $i=(k_1-1)d+h_1$, $j=(k_2-1)d+h_2$ and $k'=k_2-k_1+1$, where $1\leq k_1\leq k_2\leq n$ and $h_1,h_2=1,\dots,d$, with $h_1<h_2$ when $k_1=k_2$, then
$$\begin{array}{rl}
\hspace{-.25cm} R(i,j)=
&\hspace{-.25cm}\displaystyle 
\frac{1}{2cn}\Bigg(\dfrac{-1}{T_m(q)-1}\Bigg[ 
2n(h_2-h_1) \left( V_{k'-1}(q)  -V_{m-k'}(q) \right)
\\[3ex]
\hspace{-.25cm}\displaystyle- &\hspace{-.25cm}\displaystyle
  2\Big(\frac{an}{c} (h_1-1) (h_2-1-nd) -nd\Big)  \left( U_{k'-2}(q) +U_{m-k'}(q) \right)
 \\[3ex]
 \hspace{-.25cm}\displaystyle+&\hspace{-.25cm}\displaystyle
\Big(\frac{an}{c}((h_1-1)^2+(h_2-1)^2-d(h_1+h_2-2))-2dn\Big)U_{m-1}(q)\Bigg]
  \\[3ex] 
  +&\hspace{-.25cm}\displaystyle 2\left| (k'-1)d+h_2-h_1\right|\left(n-\left| (k'-1)d+h_2-h_1\right|\right)
  \\[3ex]
 \hspace{-.25cm}\displaystyle -&\hspace{-.25cm}\displaystyle
 2d(k'-1)\left[n-d(k'-1)+4d(h_1-h_2)\right]
 \\[3ex]
 \hspace{-.25cm}\displaystyle +&\hspace{-.25cm}\displaystyle
 2(h_1-h_2)[h_1-h_2+n]\Bigg),
  \end{array}$$
%%%%%%%%%
%%%%%%%%%
\item[b)] if $i=(k_1-1)d+h_1\neq n+1$ and $j=n+1$, then
$$R(i,n+1)=-\frac{1}{2c(T_m(q)-1)}\left(\frac{a}{c}(h_1-1)(h_1-1-d)-d\right)U_{m-1}(q).$$

%%%%%%%%%
%%%%%%%%%
\end{itemize}
with $q=\dfrac{ad}{2c}+1$.
Moreover, the Kirchhoff index of $\Gamma'$ is
$$
K(\Gamma')
=\dfrac{n(n+1)(6cd+a(d^2-1))}{12c^2}\dfrac{U_{m-1}(q)}{T_m(q)-1}
-\frac{(d^2-1)an+12cdn}{12ac}.
$$
\end{prop}

\begin{proof}
Firstly we point out that for any $i=1,\dots,n$ we can consider that $i=(k_1-1)d+h_1$, for any $1\leq k_1\leq m$, $1\leq h_1\leq d$, and thus any element of the main diagonal $ (({\sf L}')^{\#})_{ii}=({\sf L}'_{11})_{ii}=({\sf N}_{1})_{h_1h_1}$. Besides we know that $( ({\sf L}')^{\#})_{n+1\,n+1}={\sf L}'_{22}$.
Therefore, for any $i,j\neq n+1$, without loss of generality we can assume that $i=(k_1-1)d+h_1$, $j=(k_2-1)d+h_2$ for any $1\leq k_1\leq k_2\leq m$ and $h_1,h_2=1,\dots,d$, with $h_1<h_2$ when $k_1=k_2$, and if $k'=k_2-k_1+1$. Then
\begin{eqnarray*}
 R(i,j)&=& ({\sf L}'_{11})_{ii}+({\sf L}'_{11})_{jj}-2({\sf L}'_{11})_{ij}\\
        &=&({\sf N}_{1})_{h_1h_1}+({\sf N}_{1})_{h_2h_2}-2({\sf N}_{k'})_{h_1h_2},
\end{eqnarray*}
and for $i=(k_1-1)d+h_1\neq n+1$ and $j=n+1$,
\begin{eqnarray*}
 R(i,n+1)&=& ({\sf L}'_{11})_{ii}+{\sf L}'_{22}-2({\sf L}'_{12})_{i}\\
        &=&({\sf N}_{1})_{h_1h_1}+{\sf L}'_{22}-2({\sf L}'_{12})_{h_1}.
\end{eqnarray*}
By simplifying the previous expressions we obtain the claimed result, and besides the Kirchhoff index of the non--complete wheel $(n,m)$-$W$ is obtained by simplifying the expression
$$\begin{array}{rl}
K(\Gamma')=&\hspace{-.25cm}\displaystyle (n+1){\rm trace}({\sf L}')=(n+1){\rm trace}({\sf L}'_{11})+(n+1) {\sf L}'_{22}.
 \end{array}$$
\end{proof}

Finally, if we consider the case where $d=1$, $n=m$, we obtain a complete Wheel, and in this case we notice that the Kirchhoff index coincides with the result obtained in \cite{BCEG08}.
\begin{corollary}
The Kirchhoff index of a Wheel on $n+1$ vertices is
$$
K(\Gamma')
=\dfrac{n(n+1)}{2c}\dfrac{ U_{n-1}(q)}{T_n(q)-1}-\dfrac{n}{a}.
$$

\end{corollary}

\section{Technical lemmas}
In this section we include the technical lemmas and their proofs. Observe that the first Lemma excludes the case of adding a pendant vertex ($m=1$). Besides, we point out that the result of Lemma~\ref{lemma.MR} is obtained from the application of Theorem 3.5. from \cite{CEGJM15}.
\begin{lemma}\label{lemma.GR-1}
The inverse matrix of ${\sf G_R}$ is the circulant matrix
$${\sf G_R}^{-1}=\dfrac{12c}{n(m^2-1)}\mbox{\rm circ}\left(b_0,-b_1,-1,\ldots,-1,-b_1\right),$$
where $b_0=(m^3-m-6)/6$ and $b_1=(m^3-m+12)/12$.
Moreover,
$${\sf G_R}^{-1}+ {\sf \Pi_R}^{\intercal}{\sf \Pi_R}=\dfrac{1}{n(m^2-1)}\mbox{\rm circ}\left(c_0,-c_1,-c_2,\ldots,-c_2,-c_1\right),$$
where
$$c_0=12cb_0+ad(m-1)(m^2-1), c_1=12cb_1+ad(m^2-1), c_2=12c+ad(m^2-1).$$
\end{lemma}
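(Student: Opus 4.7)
The plan is to recognise $\GR$ as a rank-one shift of the Green matrix of an auxiliary cycle, and then to invert it via the ansatz ``cycle Laplacian plus a multiple of ${\sf J}$''. Concretely, let $\tilde{\GG}$ denote the Green matrix of the cycle $C_m$ with constant conductance $c/d$; by the formula recalled in the introduction,
\[
(\tilde{\GG})_{k_1k_2}=\frac{d}{12cm}\bigl(m^2-1-6\,j(m-j)\bigr),\qquad j=|k_1-k_2|.
\]
A direct comparison with the entry $-\frac{|i-j|(n-|i-j|)}{2cn}=-\frac{d\,j(m-j)}{2cm}$ of $\GG-\frac{n^2-1}{12cn}{\sf J}$ at the hub positions $1+(k-1)d$ (using $n=md$) yields $\GR=\tilde{\GG}-\beta{\sf J}_m$ with $\beta=\frac{d(m^2-1)}{12cm}$. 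Because $\tilde{\GG}{\sf j}=0$, we get $\GR{\sf j}=-\beta m\,{\sf j}\ne 0$; together with the fact that $\tilde{\GG}$ is nonsingular on ${\sf j}^\perp$, this already guarantees that $\GR$ is invertible.

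Next I would try $\GR^{-1}=\tilde{\LL}+\gamma{\sf J}$, where $\tilde{\LL}=\frac{c}{d}\,\mbox{\rm circ}(2,-1,0,\dots,0,-1)$ is the Laplacian of $C_m$ with conductance $c/d$ (so $\tilde{\LL}\tilde{\GG}=\II-\frac{1}{m}{\sf J}$). Since $\tilde{\LL}{\sf J}=0$, ${\sf J}\tilde{\GG}=0$ and ${\sf J}^2=m{\sf J}$, the product equals
\[
(\tilde{\LL}+\gamma{\sf J})\,\GR=\II-\tfrac{1}{m}{\sf J}-\beta\gamma m\,{\sf J},
\]
and the choice $\gamma=-\frac{1}{\beta m^2}=-\frac{12c}{n(m^2-1)}$ makes this equal to $\II$. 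Factoring $\frac{12c}{n(m^2-1)}$ out of each entry of $\tilde{\LL}+\gamma{\sf J}$ and simplifying with $n=md$ gives the diagonal coefficient $\frac{m(m^2-1)}{6}-1=\frac{m^3-m-6}{6}=b_0$, the first off-diagonal coefficient $-\frac{m(m^2-1)}{12}-1=-\frac{m^3-m+12}{12}=-b_1$, and the value $-1$ in all remaining positions, which is precisely the stated circulant form.

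For the second identity it suffices to add $\PiR^\top\PiR=\frac{a}{m}\,\mbox{\rm circ}(m-1,-1,\dots,-1)$ to $\GR^{-1}$ entry by entry and bring both summands over the common denominator $n(m^2-1)=md(m^2-1)$. The three distinct coefficients $c_0=12cb_0+ad(m-1)(m^2-1)$, $c_1=12cb_1+ad(m^2-1)$ and $c_2=12c+ad(m^2-1)$ then drop out after one line of arithmetic.

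The only non-routine step is the first one: spotting that $\GR$ is a rank-one shift of a Green matrix of a \emph{rescaled} cycle (conductance $c/d$), which is what makes the ``Laplacian plus multiple of ${\sf J}$'' form of $\GR^{-1}$ natural. A pedestrian alternative would be to verify $\GR\GR^{-1}=\II$ directly via circulant convolution, which works but hides the reason the formula has this shape and buries the $n=md$ bookkeeping in a long computation.
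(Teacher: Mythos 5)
Your proof is correct: the identification $\GR=\tilde{\GG}-\beta{\sf J}_m$ with $\beta=\frac{d(m^2-1)}{12cm}$ checks out (the hub entries of ${\sf G}-\frac{n^2-1}{12cn}{\sf J}$ are $-\frac{d\,j(m-j)}{2cm}$ with $n=md$), the ansatz $\GR^{-1}=\tilde{\LL}+\gamma{\sf J}$ with $\gamma=-\frac{12c}{n(m^2-1)}$ does give a two-sided inverse by the computation you display, and the extraction of $b_0$, $b_1$ and the final addition of $\PiR^\top\PiR$ over the common denominator reproduce exactly the stated $c_0,c_1,c_2$. Note, however, that the paper states this lemma \emph{without} proof (it only remarks that the case $m=1$ is excluded), so there is no argument of the author's to match yours against; the implicit intended route is presumably either a direct circulant-convolution verification or an appeal to the general results on inverses of circulant matrices in the cited references, exactly the ``pedestrian alternative'' you mention. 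Your route is genuinely more informative: interpreting $\GR$ as the Green matrix of the contracted cycle $C_m$ with conductance $c/d$, shifted by a multiple of ${\sf J}$, explains why the inverse has the form ``cycle Laplacian plus rank-one correction'' and hence why only three distinct coefficients $b_0$, $b_1$, $-1$ appear, whereas a brute-force circulant check would confirm the formula without explaining it. Two minor remarks: your preliminary invertibility argument is redundant once you exhibit an explicit (one-sided, hence two-sided, since the matrices are square and circulant) inverse, and you should state explicitly that $m>1$ is assumed, since $\beta\neq 0$ and the division by $m^2-1$ both require it — this is precisely the pendant-vertex case the paper excludes.
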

\begin{proof}
Taking into account that $\sum_{k=1}^m k(m-k)=-(m^3-m)/3$, the first part of the result can be checked by simple multiplication of both matrices. The second part of the statement is straightforward.
\end{proof}
\begin{lemma}\label{lemma.MR}
The circulant matrix  ${\sf G_R}^{-1}+ {\sf \Pi_R}^{\intercal}{\sf \Pi_R}$ is invertible iff $m>1$
and in this case
$${  \sf M_R}=({\sf G_R}^{-1}+ {\sf \Pi_R}^{\intercal}{\sf \Pi_R})^{-1}=\mbox{\rm circ}(t_1,\dots,t_m),$$
where
$$t_j=\left[\frac{[U_{j-2}(q)+U_{m-j}(q)]d}{2c[T_m(q)-1]}-\frac{12c+ad(m^2-1)}{12acm}\right], \quad j=1,\dots,m,$$
with $q=\dfrac{ad}{2c}+1$.
\end{lemma}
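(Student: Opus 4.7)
The plan is to reduce the inversion of $\GR^{-1}+\PiR^\top\PiR$ to that of a symmetric circulant tridiagonal matrix plus a rank-one term, and then apply Theorem 3.5 of \cite{CEGJM15}. First, I rewrite $\GR^{-1}$ using the cyclic shift operator $S$. From Lemma \ref{lemma.GR-1}, combined with $\II+S+S^2+\cdots+S^{m-1}={\sf J}$ and the elementary identities $b_0+1=m(m^2-1)/6$ and $b_1-1=m(m^2-1)/12$, one collapses
\[
\GR^{-1}=\frac{c}{d}(2\II-S-S^{-1})-\frac{12c}{n(m^2-1)}\,{\sf J}.
\]
Since $\PiR^\top\PiR=a\II-(a/m)\,{\sf J}$, summing gives
\[
\GR^{-1}+\PiR^\top\PiR=\AA-\gamma\,{\sf J},\qquad \AA=\tfrac{c}{d}(2q\II-S-S^{-1}),
\]
with $q=1+\tfrac{ad}{2c}$ and $\gamma=\tfrac{ad(m^2-1)+12c}{md(m^2-1)}$.

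Next, I handle the rank-one perturbation. The all-ones vector ${\sf j}$ satisfies $\AA\,{\sf j}=a\,{\sf j}$ (because $\tfrac{2c}{d}(q-1)=a$) and ${\sf J}\,{\sf j}=m\,{\sf j}$, hence $(\AA-\gamma{\sf J})\,{\sf j}=(a-\gamma m)\,{\sf j}$ with $a-\gamma m=-\tfrac{12c}{d(m^2-1)}$, which is nonzero precisely when $m>1$. On ${\sf j}^\perp$, ${\sf J}$ vanishes and $\AA$ has the strictly positive eigenvalues $\tfrac{2c}{d}(q-\cos(2\pi k/m))$ for $k=1,\ldots,m-1$ (since $q>1$). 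So $\AA-\gamma{\sf J}$ is invertible iff $m>1$. When it is, Sherman--Morrison gives
\[
(\AA-\gamma\,{\sf J})^{-1}=\AA^{-1}+\frac{\gamma}{a(a-\gamma m)}\,{\sf J}=\AA^{-1}-\frac{12c+ad(m^2-1)}{12\,acm}\,{\sf J},
\]
which already supplies the constant second summand in the claimed $m_j$.

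Finally, I invoke Theorem 3.5 of \cite{CEGJM15}, which evaluates the inverse of the symmetric circulant tridiagonal matrix $2q\II-S-S^{-1}$ in terms of Chebyshev polynomials of the first and second kind: its first row has $j$-th entry $\tfrac{U_{j-2}(q)+U_{m-j}(q)}{2(T_m(q)-1)}$. Scaling by $d/c$ yields the first row of $\AA^{-1}$, and adding the Sherman--Morrison correction entrywise gives the stated formula for $m_j$; circulant symmetry then determines all of ${\sf M_R}$.

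The main obstacle is the rewriting in the first step: recognising that the seemingly opaque circulant of Lemma \ref{lemma.GR-1} is secretly the Laplacian of the cycle on $m$ vertices (rescaled by $c/d$) minus a rank-one ${\sf J}$-term. This is what allows $\GR^{-1}+\PiR^\top\PiR$ to be recast as a Chebyshev-friendly tridiagonal circulant perturbed by a rank-one piece; thereafter the Sherman--Morrison computation and the quotation of \cite{CEGJM15} are essentially mechanical.
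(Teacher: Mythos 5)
Your proposal is correct; I checked the key identities: $b_0+1=m(m^2-1)/6$, $b_1-1=m(m^2-1)/12$ do collapse $\GR^{-1}$ to $\tfrac{c}{d}(2\II-S-S^{-1})-\tfrac{12c}{n(m^2-1)}{\sf J}$, the eigenvalue of $\AA-\gamma{\sf J}$ on ${\sf j}$ is indeed $a-\gamma m=-\tfrac{12c}{d(m^2-1)}$, and $\tfrac{\gamma}{a(a-\gamma m)}=-\tfrac{12c+ad(m^2-1)}{12acm}$ reproduces the constant summand, while the Chebyshev first row $\tfrac{U_{j-2}(q)+U_{m-j}(q)}{2(T_m(q)-1)}$ of $(2q\II-S-S^{-1})^{-1}$ checks out on small cases. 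The route is, however, genuinely different from the paper's: the paper offers no argument beyond feeding the three--parameter circulant $\frac{1}{n(m^2-1)}\mbox{\rm circ}(c_0,-c_1,-c_2,\ldots,-c_2,-c_1)$ of Lemma \ref{lemma.GR-1} wholesale into \cite[Theorem 3.5]{CEGJM15}, which is stated precisely for circulants of that shape, whereas you first peel off the rank--one ${\sf J}$--term by Sherman--Morrison and only need the classical tridiagonal--circulant inverse. What your decomposition buys is transparency: the invertibility criterion $m>1$ is visibly the non--vanishing of the single eigenvalue on ${\sf j}$ (the paper asserts it without justification), and the two summands of $m_j$ are seen to come from two structurally distinct sources. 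The only soft spot is the phrase ``nonzero precisely when $m>1$'': at $m=1$ the expression $-\tfrac{12c}{d(m^2-1)}$ (and indeed $\GR^{-1}$ itself) is undefined rather than zero, so the ``only if'' half of the equivalence is really the degenerate observation that $\GR$ is then the $1\times1$ zero matrix; the paper sidesteps this the same way by excluding $m=1$ from Lemma \ref{lemma.GR-1}.
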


\begin{lemma}\label{lemma.F}
The matrix  ${\sf F}$ is a block Toeplitz matrix given by
$$
 \FF=
 \begin {bmatrix}\FF_1 & \FF_2 & \dots & \FF_m\\[1ex]
 \FF_m & \FF_{1} & \ddots& \vdots\\[1ex]
\vdots & \ddots &\ddots&  \FF_2 \\[1ex]
 \FF_2 & \dots & \FF_m& \FF_1
 \end {bmatrix}
 $$
where each submatrix ${\sf F}_i$ has all its elements equal $0$ except the first one, and
$$({\sf F}_i)_{11}=f_i=a\delta_{i1}-\frac{a^2d}{2c}\frac{\left[U_{i-2}(q)+U_{m-i}(q)\right]}{\left[T_m(q)-1\right]},  \; for \; any\; i=1,\dots,m,$$
with $q=\dfrac{ad}{2c}+1$.
\end{lemma}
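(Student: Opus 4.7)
The plan is to exploit the sparse support of ${\sf \Pi}$ to reduce ${\sf F}$ to an $m\times m$ circulant calculation, then substitute the explicit form of ${\sf M_R}$ from Lemma \ref{lemma.MR} and simplify via a Chebyshev identity.

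\textbf{Step 1 (structural reduction).} I would first observe that every dipole column of ${\sf \Pi}$ is supported on two of the hub--adjacent vertices $1+(k-1)d$, $k=1,\ldots,m$. Hence all rows (and, by transposition, all columns) of ${\sf \Pi}$ outside this index set are identically zero, so that both ${\sf \Pi}{\sf \Pi}^\top$ and ${\sf \Pi}{\sf \Pi_R}^\top{\sf M_R}{\sf \Pi_R}{\sf \Pi}^\top$ vanish at every position not of the form $(1+(k-1)d,\,1+(\ell-1)d)$. Grouping indices into $d\times d$ blocks, this already forces each ${\sf F}_i$ to have $(1,1)$ as its only possibly nonzero entry, which is the block--sparsity pattern asserted in the lemma.

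\textbf{Step 2 (an $m\times m$ circulant).} Writing $A={\sf \Pi_R}{\sf \Pi_R}^\top=a{\sf I}-\tfrac{a}{m}{\sf J}$, the $m\times m$ matrix holding the selected entries equals $A-A{\sf M_R}A$, which is circulant as a product of circulant matrices; its first row contains $(f_1,\ldots,f_m)$, and this circulant structure directly yields the block--circulant (hence block--Toeplitz) layout displayed in the statement. Using $A{\sf j}=0$ and letting $\sigma$ denote the common row sum of ${\sf M_R}$, the sandwich simplifies cleanly to $(A{\sf M_R}A)_{1,k}=a^2 m_k-a^2\sigma/m$, so that
\[
f_k \;=\; a\delta_{1,k}-\tfrac{a}{m}-a^2 m_k+\tfrac{a^2\sigma}{m}.
\]

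\textbf{Step 3 (Chebyshev identity and conclusion).} Substituting the formula for $m_k$ from Lemma \ref{lemma.MR} immediately produces the target term $-\tfrac{a^2 d[U_{k-2}(q)+U_{m-k}(q)]}{2c[T_m(q)-1]}$, while the constant piece of $-a^2 m_k$ contributes exactly $a/m$ and so cancels the stand--alone $-a/m$. What remains is to show that $\sigma=-d(m^2-1)/(12c)$, which via the same formula for $m_k$ reduces to the identity
\[
\sum_{k=1}^{m}\bigl(U_{k-2}(q)+U_{m-k}(q)\bigr)\;=\;\frac{T_m(q)-1}{q-1}.
\]
I would prove this by the substitution $q=\cos\theta$: since $U_{-1}=0$, the left side becomes $2\sum_{j=0}^{m-2}U_j(q)+U_{m-1}(q)$, and both sides collapse to $\sin^2(m\theta/2)/\sin^2(\theta/2)$ via the standard closed form for $\sum_{j=1}^{n}\sin(j\theta)$ together with $T_m(q)-1=-2\sin^2(m\theta/2)$. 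The main obstacle is this last Chebyshev identity; every other step is essentially linear--algebra bookkeeping once the support structure of ${\sf \Pi}$ has been exploited.
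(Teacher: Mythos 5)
Your argument is correct and follows essentially the same route as the paper: you reduce ${\sf F}$ to the $m\times m$ circulant $A-A{\sf M_R}A$ with $A={\sf \Pi_R}{\sf \Pi_R}^\top$, obtain $f_k=a\delta_{1k}-\frac{a}{m}-a^2m_k+\frac{a^2\sigma}{m}$ (the paper's starting formulas), and conclude via the same summation identity $\sum_{k=1}^m\left(U_{k-2}(q)+U_{m-k}(q)\right)=\frac{T_m(q)-1}{q-1}$, which the paper invokes without proof and you additionally verify. The only point to polish is that $q=1+\frac{ad}{2c}>1$, so the substitution $q=\cos\theta$ should be accompanied by the remark that both sides are polynomials in $q$ (or that $\theta$ may be taken complex), after which your trigonometric computation establishes the identity for all $q$.
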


\begin{proof}
Observe that
\begin{eqnarray*}
f_1&=&  \frac{a(m-1)}{m}+\frac{a^2}m\sum_{j=1}^m t_j-a^2 t_1,\\
f_i&=&  -\frac{a}{m}+\frac{a^2}m\sum_{j=1}^m t_j-a^2 t_i,  \;\;  for \; any \; i=2,\dots,d.\\
\end{eqnarray*}
Therefore, we first compute
%%%%%%%%%
%%%%%%%%%
\begin{eqnarray*}
\sum_{j=1}^m t_j&=&\sum_{j=1}^m \left[\frac{[U_{j-2}(q)+U_{m-j}(q)]d}{2c[T_m(q)-1]}-\frac{12c+ad(m^2-1)}{12acm}\right]\\
&=&\frac{d}{2c[T_m(q)-1]}\left[\sum_{j=1}^m U_{j-2}(q)+U_{m-j}(q)\right]-\frac{12c+ad(m^2-1)}{12ac}\\
&=&\frac{d}{2c[T_m(q)-1]}\frac{2c}{ad}[T_m(q)-1]-\frac{12c+ad(m^2-1)}{12ac}=\frac{-d}{12c}(m^2-1).\\
\end{eqnarray*}
And now we have
\begin{eqnarray*}
f_1&=&  \frac{a(m-1)}{m}-\frac{a^2d(m^2-1)}{12cm}-a^2 \left[\frac{[U_{m-1}(q)]d}{2c[T_m(q)-1]}-\frac{12c+ad(m^2-1)}{12acm}\right]\\
&=& a-\frac{a^2d}{2c} \frac{U_{m-1}(q)}{\left[T_m(q)-1\right]},\\
\end{eqnarray*}
and for any $i=2,\dots,d$,
\begin{eqnarray*}
f_i&&=-\frac{a}{m}-\frac{a^2d(m^2-1)}{12cm}-a^2 \left[\frac{[U_{i-2}(q)+U_{m-i}(q)]d}{2c[T_m(q)-1]}-\frac{12c+ad(m^2-1)}{12acm}\right]\\
&&=- \frac{a^2d}{2c}\frac{\left[U_{i-2}(q)+U_{m-i}(q)\right]}{\left[T_m(q)-1\right]}.
\end{eqnarray*}
\end{proof}
%%%%%%%%%%%
%%%%%%%%%%%
\begin{lemma}\label{lemma.H}
The matrix  ${\sf H}$ is a block Toeplitz matrix given by
$$
 \HH=
 \begin {bmatrix}{\sf H}_1 & {\sf H}_2  & \dots & {\sf H}_m \\[1ex]
 {\sf H}_m  & {\sf H}_1 & \ddots& \vdots\\[1ex]
\vdots & \ddots &\ddots&  {\sf H}_2  \\[1ex]
{\sf H}_2  & \dots & {\sf H}_m  & {\sf H}_1
 \end {bmatrix}
 $$
where each submatrix ${\sf H}_k$ has all its elements equal $0$ except its first column, and for any $i=1,\dots,d$
$$\begin{array}{rl}
\hspace{-.25cm}({\sf H}_1)_{i1}=&\hspace{-.25cm}\displaystyle-\dfrac{d}{n}-\dfrac{a}{2c}\dfrac{1}{\left[T_m(q)-1\right]}
\Big[(i-1)(V_{m-1}(q)-1)-dU_{m-1}(q)\Big]
,\\[3ex]
\hspace{-.25cm}({\sf H}_k)_{i1}=&\hspace{-.25cm}\displaystyle-\dfrac{d}{n}-\dfrac{a}{2c}\dfrac{1}{\left[T_m(q)-1\right]}\Big[(i-1)(V_{k-1}(q)-V_{m-k}(q))\\[3ex]
-&\hspace{-.25cm}\dfrac{d}{c}[a(i-1)+c](U_{k-2}(q)+U_{m-k}(q))\Big],  \; for \;k=2,\dots,d,
\end{array}$$
with $q=\dfrac{ad}{2c}+1$.
\end{lemma}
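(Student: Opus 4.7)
My plan is to exploit the extreme sparsity of $\FF$ (each block $\FF_i$ is nonzero only at its $(1,1)$ entry) to collapse $\HH=\GG\FF$ to a short sum, and then to reduce that sum via the same Chebyshev identity used in the proof of Lemma~\ref{lemma.F}, augmented by first and second moment formulas.

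First I would observe that $\FF_{r,s}$ vanishes unless both $r$ and $s$ are of the form $1+(k-1)d$, with value $f_{(k_2-k_1)\bmod m+1}$. Consequently $\HH_{p,q}=\sum_r\GG_{p,r}\FF_{r,q}$ vanishes identically whenever $q$ is not of the form $1+(k_2-1)d$, which is precisely the claimed ``first column only'' sparsity of each $\HH_k$. For $p=1+(k-1)d+(i-1)$ and $q=1+(k_2-1)d$, reparametrizing by $\ell=(k_1-k)\bmod m$, the cyclic distance $\delta_\ell$ from $p$ to $1+(k_1-1)d$ equals $i-1$ for $\ell=0$ and $\ell d-(i-1)$ for $\ell\ge 1$; this is independent of $k$ because $\delta(n-\delta)$ is invariant under $\delta\mapsto n-\delta$, and the linear and complementary representatives of the cyclic distance sum to $n$. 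This yields the block Toeplitz structure and reduces the problem to computing
$$(\HH_{k'})_{i,1}=\sum_{\ell=0}^{m-1}\GG_{p,\,1+(k_1-1)d}\,f_{(k'-1-\ell)\bmod m+1},\qquad k'=(k_2-k)\bmod m+1.$$

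Second, writing $\GG_{p,r}=\tfrac{n^2-1}{12cn}-\tfrac{\delta(n-\delta)}{2cn}$ and using $\sum_{j=1}^m f_j=0$ (a direct consequence of the identity $\sum_j [U_{j-2}(q)+U_{m-j}(q)]=\tfrac{2c}{ad}(T_m(q)-1)$ that closes the proof of Lemma~\ref{lemma.F}), the constant piece vanishes and we are left with
$$(\HH_{k'})_{i,1}=-\frac{1}{2cn}\sum_{\ell=0}^{m-1}\delta_\ell(n-\delta_\ell)\,f_{(k'-1-\ell)\bmod m+1}.$$
Splitting $f_j$ into its Kronecker spike $a\delta_{j,1}$ and its Chebyshev tail $-\tfrac{a^2d}{2c(T_m(q)-1)}[U_{j-2}(q)+U_{m-j}(q)]$, the spike selects the single index $\ell=k'-1$ and yields the elementary quadratic $-\tfrac{a}{2cn}\delta_{k'-1}(n-\delta_{k'-1})$, which after expansion accounts for the constant $-d/n$ and the $(i-1)$-linear contributions visible in the final formula.

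The main obstacle is the remaining tail, a quadratic-in-$\ell$ combination of $U_{j-2}(q)+U_{m-j}(q)$. Its evaluation requires three ingredients: the zeroth-moment identity already used in Lemma~\ref{lemma.F}, a first-moment identity for $\sum_k k\,U_k(q)$, and a second-moment identity whose closed form introduces the third-kind Chebyshev polynomial $V$ through $V_n(q)=T_n(q)+U_{n-1}(q)$. Together these produce the combinations $U_{k-2}(q)+U_{m-k}(q)$ and $V_{k-1}(q)-V_{m-k}(q)$ that decorate the stated expression. The boundary case $k=1$ then follows from the conventions $U_{-1}=0$ and $V_0=1$, combined with the different form $\delta_0=i-1$, which collapse the $V$-difference to $V_{m-1}(q)-1$ and isolate the lone $dU_{m-1}(q)$ summand. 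The difficulty is thus one of careful bookkeeping across the three moment sums rather than a conceptual obstruction.
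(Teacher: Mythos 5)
Your overall strategy is in fact the one the paper follows: use the sparsity of ${\sf F}$ and the block--circulant structure to reduce $({\sf H}_k)_{i1}$ to a cyclic convolution of the quadratic weights of the cycle's Green function against the sequence $f_1,\dots,f_m$, kill the constant part of ${\sf G}$ via $\sum_{j=1}^m f_j=0$ (the paper's $K_1=0$), and finish with Chebyshev summation identities; the paper merely organizes the bookkeeping through the moments $s_1,s_2,s_3$ of the sequence $f$ itself (both full and partial) instead of your spike--plus--tail split of $f_j$. However, as written your sketch has two concrete defects that would make the computation fail if carried out literally. First, the three ingredients you list are all \emph{full-range} moments ($\sum_k U_k(q)$, $\sum_k kU_k(q)$, $\sum_k k^2U_k(q)$); these are independent of $k$, so they cannot ``produce'' the $k$-dependent combinations $U_{k-2}(q)+U_{m-k}(q)$ and $V_{k-1}(q)-V_{m-k}(q)$ in the statement. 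The $k$-dependence arises only because the cyclic index $(k'-1-\ell)\bmod m+1$ forces the convolution to be split at the wrap-around, so you need the range-restricted sums $\sum_{j\le k-1}$ and $\sum_{j\ge k+1}$ of $U_{j-2}(q)+U_{m-j}(q)$ and of $j\bigl[U_{j-2}(q)+U_{m-j}(q)\bigr]$; this is exactly the content of the paper's $B_1$, $B_2$ and $s_r(1,k-1)$, and it is where the actual work of the lemma lies. (It is also these partial sums, via $\sum_{i=0}^{k}U_i(q)=\frac{V_{k+1}(q)-1}{2(q-1)}$, that bring $V$ into play, not a second-moment identity.)

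Second, the identity you invoke, $V_n(q)=T_n(q)+U_{n-1}(q)$, is false for the polynomials the paper uses: at $q=1$ the left side equals $1$ while the right side equals $n+1$. The relations consistent with the paper's formulas (e.g.\ with its value of $K_2$) are $V_n=U_n-U_{n-1}$, equivalently $V_{m-1}(q)=T_m(q)-(q-1)U_{m-1}(q)$, together with $T_m(q)=qU_{m-1}(q)-U_{m-2}(q)$; using your identity would produce wrong coefficients. A smaller inaccuracy of the same kind: the spike contribution $-\frac{a}{2cn}\,\delta_{k'-1}(n-\delta_{k'-1})$ is a full quadratic in $k$ and $i$, so it cannot by itself ``account for'' only the $-d/n$ and the $(i-1)$-linear pieces of the answer; those survive only after its polynomial-in-$k$ part cancels against the tail sums. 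None of this changes the viability of the plan, but the missing partial-sum identities and the corrected Chebyshev relations must be supplied before the claimed formulas for $({\sf H}_1)_{i1}$ and $({\sf H}_k)_{i1}$ can be reached.
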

%%%%%%%%%
%%%%%%%%%
\begin{proof}
We point out that $\GG$ is also a block Toeplitz matrix and thus $\HH$ is again a block Toeplitz matrix, and hence we need to compute just the first $d$ rows. We first define the following summations:
$$ s_1(i,j)= \sum\limits_{p=i}^{j}f_{p},\quad  s_2(i,j)= \sum\limits_{p=i}^{j}pf_{p}\quad \textrm{and}\quad
s_3(i,j)= \sum\limits_{p=i}^{j}p^2f_{p}.
$$
In particular,
$$\begin{array}{rl}
 \displaystyle K_1=s_1(1,m)=&\hspace{-.25cm}\displaystyle a-\frac{a}{2}\frac{1}{\left[T_m(q)-1\right]}2\left[T_m(q)-1\right]=0,\\[3ex]
 \displaystyle K_2=s_2(1,m)=&\hspace{-.25cm}\displaystyle -\frac{a}{2}\frac{m}{\left[T_m(q)-1\right]}\Big[ V_{m-1}(q)- 1 \Big],\\[3ex]
 \displaystyle K_3=s_3(1,m)=&\hspace{-.25cm} \displaystyle = -\frac{2c}{d}-\frac{a}{2}\frac{1}{\left[T_m(q)-1\right]}\Big[ m^2 V_{m-1}(q)
  -  2m U_{m-2}(q) - m(m+2) \Big].\\
 \end{array}
$$
And moreover,
$$\begin{array}{rl}
\hspace{-.25cm}B_1&\displaystyle  =s_1(k+1,m)=-\frac{a}{2}\frac{1}{\left[T_m(q)-1\right]}\Big[ V_{m-1}(q)- V_{k-1}(q)  +V_{m-k}(q)-1\Big], \\[3ex]
\hspace{-.25cm}B_2&\displaystyle =s_2(k+1,m)=-\frac{a}{2}\frac{1}{\left[T_m(q)-1\right]}\Big[m V_{m-1}(q)+k(V_{m-k}(q)-V_{k-1}(q)) \\[3ex]
 \hspace{-.25cm}&\displaystyle +U_{k-2}(q)+U_{m-k}(q) -U_{m-2}(q)-(m+1)\Big].\\
 \end{array}
$$
%%%%%%%%%
%%%%%%%%%
Now observe that when $j=1+(k-1)d$ and $k=1,\ldots,m$,  for $i=1,\ldots,d$ we have that
$$\begin{array}{rl}
({\sf H}_k)_{i1}=&\hspace{-.25cm}\displaystyle\sum\limits_{\ell=1}^{k}g_{i,1+(\ell-1)d}f_{k-\ell+1}+\sum\limits_{\ell=k+1}^{m}g_{i,1+(\ell-1)d}f_{m+k-\ell+1}\\[3ex]
=&\hspace{-.25cm}\displaystyle g_{i,1}f_{k}+\sum\limits_{\ell=2}^{k}g_{i,1+(\ell-1)d}f_{k-\ell+1}+\sum\limits_{\ell=k+1}^{m}g_{i,1+(\ell-1)d}f_{m+k-\ell+1}
\\[3ex]
=&\hspace{-.25cm}\displaystyle A_1+ A_2+ A_3.
\\[3ex]
\end{array}
$$
%%%%%%%%%
Firstly we suppose $k>1$. We compute separately $A_2$ and $A_3$ as follows
$$\begin{array}{rl}
\hspace{-.25cm}\displaystyle A_{21}=&\sum\limits_{\ell=2}^{k}f_{k-\ell+1}= \sum\limits_{p=1}^{k-1}f_{p}=s_1(1,k-1),\\[3ex]
\hspace{-.25cm}\displaystyle A_{22}=&\sum\limits_{p=1}^{k-1}(k-p)f_{p}=j s_1(1,k-1) -  s_2(1,k-1),\\[3ex]
\hspace{-.25cm}\displaystyle A_{23}= &\sum\limits_{p=1}^{k-1}(k-p)^2f_{p}=
k^2 s_1(1,k-1) - 2k s_2(1,k-1)+s_3(1,k-1),
\\[3ex]
\end{array}
$$
and thus,
%%%%%%%%%
$$\begin{array}{rl}
A_2=\hspace{-.25cm}&\displaystyle
 \sum\limits_{\ell=2}^{k}g_{i,1+(\ell-1)d}f_{k-\ell+1}= \dfrac{1}{12cn} \Big[\Big(n^2-1+6(i-1)(n+i-1)\Big)A_{21}\\[3ex]
&\hspace{-.25cm}\displaystyle-6d(n+2i-2)A_{22}+6d^2A_{23}\Big]=\dfrac{1}{12cn}\Big[\alpha_1A_{21}+\alpha_2 A_{22}+\alpha_3 A_{23}\Big]\\[3ex]
=&\hspace{-.25cm}\displaystyle  \dfrac{1}{12cn}\Big[( \alpha_1+\alpha_2k+\alpha_3 k^2)s_1(1,k-1)-(\alpha_2+ 2\alpha_3 k)s_2(1,k-1)\\[3ex]
+&\hspace{-.25cm}\displaystyle \alpha_3s_3(1,k-1)\Big].\\[3ex]
\end{array}
$$
Besides,
$$\begin{array}{rl}
\hspace{-.25cm}\displaystyle A_{31}=&\sum\limits_{\ell=k+1}^{m}f_{m+k-\ell+1}=\sum\limits_{p=k+1}^{m}f_{p}=s_1(k+1,m),\\[3ex]
\hspace{-.25cm}\displaystyle A_{32}=&\sum\limits_{\ell=k+1}^{m}(\ell-1)f_{m+k-\ell+1}=(m+k)s_1(k+1,m)-s_2(k+1,m),\\[3ex]
\hspace{-.25cm}\displaystyle A_{33}=&\sum\limits_{\ell=k+1}^{m}(\ell-1)^2f_{m+k-\ell+1}=
\\[3ex]
=&\hspace{-.25cm}\displaystyle(m+k)^2s_1(k+1,m)-2(m+k)s_2(k+1,m)+s_3(k+1,m),
\end{array}
$$
and thus,
$$\begin{array}{rl}
A_3=\hspace{-.25cm}&\hspace{-.25cm}\displaystyle
 \sum\limits_{\ell=k+1}^{m}g_{i,1+(\ell-1)d}f_{m+k-\ell+1}= \dfrac{1}{12cn}\Big[\alpha_1A_{31}+\alpha_2 A_{32}+\alpha_3 A_{33}\Big]\\[3ex]
= \dfrac{1}{12cn}&\hspace{-.25cm}\displaystyle \Big[( \alpha_1+\alpha_2 (m+k)+\alpha_3 (m+k)^2)s_1(k+1,m)\\[3ex]
&\hspace{-.25cm}\displaystyle -(\alpha_2+2\alpha_3 (m+k))s_2(k+1,m)+ \alpha_3s_3(k+1,m)\Big]\\[3ex]
= \dfrac{1}{12cn}&\hspace{-.25cm}\displaystyle \Big[( \alpha_1+\alpha_2k+\alpha_3 k^2)s_1(k+1,m) -(\alpha_2+2\alpha_3 k)s_2(k+1,m)\\[3ex]
&\hspace{-.25cm}\displaystyle+ \alpha_3s_3(k+1,m)+\left(\alpha_2m+\alpha_3m^2+2\alpha_3mk\right)B_1-2m\alpha_3B_2\Big].\\[3ex]
\end{array}
$$
%%%%%%%%%
Now adding all $A_1+A_2+A_3=$
$$\begin{array}{rl}
= \dfrac{1}{12cn}&\hspace{-.25cm}\displaystyle \Big[\left(n^2-1-6(i-1)(n-i+1)\right)f_{k}+( \alpha_1+\alpha_2k+\alpha_3 k^2)(-f_{k}) \\[3ex]
-&\hspace{-.25cm}\displaystyle
(\alpha_2+2\alpha_3 k)\left(K_2-kf_k\right)+ \alpha_3\left(K_3-k^2f_k\right)-2m\alpha_3B_2
\\[3ex]
+&\hspace{-.25cm}\displaystyle \left(\alpha_2m+\alpha_3m^2+2\alpha_3mk\right)B_1\Big]
\\[3ex]
= \dfrac{1}{12cn}&\hspace{-.25cm}\displaystyle \Big[(n^2-1-6(i-1)(n-i+1)- \alpha_1)f_{k}-(\alpha_2+2\alpha_3 k)K_2+ \alpha_3 K_3 \\[3ex]
&\hspace{-.25cm}\displaystyle
-2m\alpha_3B_2+ \left(\alpha_2m+\alpha_3m^2+2\alpha_3mk\right)B_1\Big]
\\[3ex]
=- \dfrac{d}{n}&\hspace{-.25cm}\displaystyle - \dfrac{a}{2c}\frac{1}{\left[T_m(q)-1\right]}\Big[(i-1)(V_{k-1}(q)-V_{m-k}(q))
\\[3ex]
&\hspace{-.25cm}\displaystyle \hspace{4cm}
-\frac{d}{c}\left(a(i-1)+c\right)\left(U_{k-2}(q)+U_{m-k}(q)\right)\Big].
\\[3ex]
\end{array}
$$
Now observe that if $k=1$ then $j=1$, and in this case $A_2=0$. Besides it holds $s_1(2,m)=-f_1$, $s_2(2,m)=K_2-f_1$ and
$s_3(2,m)=K_3-f_1$.

Thus
$$\begin{array}{rl}
({\sf H}_1)_{i1}
=&\hspace{-.25cm}\displaystyle g_{i,1}f_{1}+\sum\limits_{\ell=2}^{m}g_{i,1+(\ell-1)d}f_{m+2-\ell}
\\[3ex]
= \dfrac{1}{12cn}&\hspace{-.25cm}\displaystyle \Big[\left(n^2-1-6(i-1)(n-i+1)\right)f_{1}- ( \alpha_1+\alpha_2 (m+1)+\\[3ex]
&\hspace{-.25cm}\displaystyle \alpha_3 (m+1)^2)f_1-(\alpha_2+2\alpha_3 (m+1))(K_2-f_1)+ \alpha_3(K_3-f_1)\Big]\\[3ex]
= \dfrac{1}{12cn}&\hspace{-.25cm}\displaystyle \Big[(n^2-1-6(i-1)(n-i+1)- \alpha_1-\alpha_2 (m+1)- \alpha_3 (m+1)^2\\[3ex]
&\hspace{-.25cm}\displaystyle+\alpha_2+2\alpha_3 (m+1) -\alpha_3)f_1-(\alpha_2+2\alpha_3 (m+1))K_2+ \alpha_3K_3\Big]\\[3ex]
= \dfrac{1}{12cn}&\hspace{-.25cm}\displaystyle \Big[(n^2-1-6(i-1)(n-i+1)- \alpha_1-\alpha_2 m- \alpha_3m^2)f_1\\[3ex]
&\hspace{-.25cm}\displaystyle-(\alpha_2+2\alpha_3 (m+1))K_2+ \alpha_3K_3\Big]\\[3ex]
= \dfrac{1}{12cn}&\hspace{-.25cm}\displaystyle \Big[(12d(i-1)-6dn-12d^2)K_2+6d^2K_3\Big]\\[3ex]
=-\dfrac{d}{n} &\hspace{-.25cm}\displaystyle-\dfrac{a}{2c}\dfrac{1}{\left[T_m(q)-1\right]}
\Big[(i-1)(V_{m-1}(q)-1)-dU_{m-1}(q)\Big].\\
\end{array}
$$
%%%%%%%%%
\end{proof}

%Next we compute the matrix ${\sf K}=\GG \FF \GG=\HH\GG$.
\begin{lemma}\label{lemma.K}
The matrix  ${\sf K}$ is a block Toeplitz matrix given by
$$
{\sf K}=
 \begin {bmatrix}{\sf K}_1& {\sf K}_2  & \dots & {\sf K}_m \\[1ex]
 {\sf K}_m  & {\sf K}_1 & \ddots& \vdots\\[1ex]
\vdots & \ddots &\ddots&  {\sf K}_2  \\[1ex]
{\sf K}_2  & \dots & {\sf K}_m  & {\sf K}_1
 \end {bmatrix}
 $$
where each submatrix ${\sf K}_k$ has as elements
$$\begin{array}{rl}
\hspace{-.25cm}({\sf K}_k)_{i,h+1}=
&\hspace{-.25cm}\displaystyle -\dfrac{1}{2cn}\dfrac{1}{T_m(q)-1}
\bigg[ n \left( h-i+1 \right)  \left( V_{k-1}(q)  -V_{m-k}(q) \right)
\\[3ex]
\hspace{-.25cm}\displaystyle- &\hspace{-.25cm}\displaystyle
  \Big(\frac{an}{c} ( i-1) ( h-d) -nd \Big)  \left( U_{k-2}(q) +U_{m-k}(q) \right)
 \\[3ex]
 \hspace{-.25cm}\displaystyle- &\hspace{-.25cm}\displaystyle
 \frac{2cd}{a} \left( V_{m-1}(q) -1\right) - d^2 U_{m-1}(q) \bigg] +dh+nh-nd-\frac{n^2}{6}+\frac{d^2}{6}
 \\[3ex]
 \hspace{-.25cm}\displaystyle +&\hspace{-.25cm}\displaystyle
 \left( 2\,kd-3\,d+2\,h-n \right)  \left( i-1\right) +kd \left( n-2\,h \right)-d^2(k-1)^2,
 \\
 \end{array}$$
with $q=\dfrac{ad}{2c}+1$, for any $k=1,\dots,m$, $i=1,\dots,d$ and $h=0,\dots,d-1$.
\end{lemma}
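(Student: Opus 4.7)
The approach parallels the proof of Lemma~\ref{lemma.H}. First observe that the Green matrix $\GG$ of the cycle $C_n$ is circulant of size $n=md$, hence block circulant with $m$ blocks of size $d\times d$; combined with the block circulant form of $\HH$ from Lemma~\ref{lemma.H}, the product ${\sf K}=\HH\GG$ is itself block circulant, so it suffices to compute the first $d$ rows, i.e.\ the blocks ${\sf K}_1,\ldots,{\sf K}_m$.

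Since each $\HH_\ell$ has only its first column nonzero, the convolution that defines $({\sf K}_k)_{i,h+1}$ collapses to
$$
({\sf K}_k)_{i,h+1}=\sum_{\ell=1}^{m}(\HH_\ell)_{i,1}\,\GG_{(\ell-1)d+1,\,(k-1)d+h+1}.
$$
Substituting the explicit Green matrix formula $\GG_{pq}=\frac{1}{12cn}\bigl(n^2-1-6|p-q|(n-|p-q|)\bigr)$, each summand becomes a quadratic polynomial in the signed displacement $(\ell-k)d-h$. I would split the $\ell$-sum into the segments where the absolute value is resolved consistently (the ranges $\ell\le k$ and $\ell>k$), expand the quadratic, and express the result in terms of the partial moment sums
$$
t_\nu(i_0,j_0)=\sum_{\ell=i_0}^{j_0}\ell^{\nu}(\HH_\ell)_{i,1},\qquad \nu=0,1,2,
$$
in direct analogy with the triple decomposition $A_1+A_2+A_3$ used in the proof of Lemma~\ref{lemma.H}.

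The full moments $t_\nu(1,m)$ and the partial moments $t_\nu(k+1,m)$ are then evaluated in closed form by substituting the explicit expression for $(\HH_\ell)_{i,1}$ given by Lemma~\ref{lemma.H} and invoking the same Chebyshev polynomial identities that produced $K_1,K_2,K_3$ and $B_1,B_2$ in that proof. Telescoping the $V_{\ell-1}(q)$ and $U_{\ell-2}(q)+U_{m-\ell}(q)$ contributions across the partial sums delivers precisely the building blocks $V_{k-1}(q)-V_{m-k}(q)$ and $U_{k-2}(q)+U_{m-k}(q)$ appearing in the claim; the constant $-d/n$ piece of $(\HH_\ell)_{i,1}$ is responsible, through its moments against the quadratic weight $|p-q|(n-|p-q|)$, for the terms $\frac{2cd}{a}(V_{m-1}(q)-1)$ and $d^2U_{m-1}(q)$ inside the bracket. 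The remaining polynomial-in-$(i,h,k,d,n)$ contributions from $\GG_{pq}$ supply the unbracketed terms $dh+nh-nd-\frac{n^2}{6}+\frac{d^2}{6}+(2kd-3d+2h-n)(i-1)+kd(n-2h)-d^2(k-1)^2$.

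The main obstacle is the bookkeeping: as in Lemma~\ref{lemma.H}, the index $\ell=1$ must be isolated because $(\HH_1)_{i,1}$ carries a different Chebyshev expression than the generic $\ell\ge 2$ formula, and its contribution has to be merged with the generic one by absorbing boundary corrections into the moment identities for $t_\nu(1,m)$ and $t_\nu(k+1,m)$. Provided that these splits and the Chebyshev telescopes are tracked cleanly, the stated expression for $({\sf K}_k)_{i,h+1}$ follows directly.
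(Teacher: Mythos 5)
Your plan is correct and follows essentially the same route as the paper's proof: the paper likewise exploits the block Toeplitz structure to reduce to the first $d$ rows, writes $({\sf K}_k)_{i,h+1}=\sum_{\ell=1}^{m}h_{i,1+(\ell-1)d}\,g_{1+(\ell-1)d,\,1+(k-1)d+h}$, splits the sum into the $\ell=1$ term and the ranges $2\le\ell\le k$ and $k+1\le\ell\le m$ where the absolute value resolves, and evaluates the resulting weighted moments of $(\HH_\ell)_{i,1}$ (its $B_1,B_2$ and $K_1,K_2,K_3$ are exactly your $t_\nu$) via the Chebyshev identities, including $qU_{m-1}(q)-U_{m-2}(q)-1=T_m(q)-1$. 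No substantive difference in method; only the closed-form bookkeeping remains to be carried out.
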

%%%%%%%%%
\begin{proof}
Again we multiply two block Toeplitz matrices and thus ${\sf K}$ is a block Toeplitz matrix, and hence we can just compute only the first $d$ rows. For each submatrix ${\sf K}_k$, $k=1,\dots,m$, we compute its elements. We point out that the $i(h+1)$-element of  ${\sf K}_k$ is the $ij$-element of ${\sf K}$, with $j=1+(k-1)d+h$, for any   $h=0,\dots,d-1$.
$$\begin{array}{rl}
({\sf K}_k)_{i,h+1}=&\hspace{-.25cm}\displaystyle\sum\limits_{\ell=1}^{m}h_{i,1+(\ell-1)d}\cdot g_{1+(\ell-1)d,j}= g_{i,1}h_{i,1}+ \sum\limits_{\ell=2}^{k}h_{i,1+(\ell-1)d}\cdot g_{1+(\ell-1)d,j}\\[3ex]
+&\hspace{-.25cm}\displaystyle\sum\limits_{\ell=k+1}^{m}h_{i,1+(\ell-1)d}\cdot g_{1+(\ell-1)d,j}= A_1+ A_2+ A_3.
\\[3ex]
\end{array}
$$
We compute separately
$$\begin{array}{rl}
\hspace{-.25cm}\displaystyle A_1=\frac{1}{12cn}&\hspace{-.25cm}\displaystyle h_{i,1}\left[n^2-1-6(h+kd-d)(n-h-kd+d)\right],\\[3ex]
\hspace{-.25cm}\displaystyle A_2=\frac{1}{12cn}&\hspace{-.25cm}\displaystyle \sum_{\ell=2}^k  h_{i,1+(\ell-1)d}\left[n^2-1-6(h+kd-\ell d)(n-h-kd+\ell d)\right]\\[3ex]
\hspace{-.25cm}\displaystyle =\frac{1}{12cn}&\hspace{-.25cm}\displaystyle \sum_{\ell=2}^k  h_{i,1+(\ell-1)d}\big[n^2-1-6(-h^2-2kdh-k^2d^2+2(kd^2+dh)\ell\\[3ex]
\hspace{-.25cm}\displaystyle &\hspace{-.25cm}\displaystyle -d^2\ell^2 + n(h+kd) -\ell dn)\big],
\end{array}
$$
$$\begin{array}{rl}
\hspace{-.25cm}\displaystyle A_3=\frac{1}{12cn}&\hspace{-.25cm}\displaystyle  \sum_{\ell=k+1}^m  h_{i,1+(\ell-1)d}
\left[n^2-1-6(-h-kd+\ell d)(n+h+kd-\ell d)\right],\\[3ex]
\hspace{-.25cm}\displaystyle =\frac{1}{12cn}&\hspace{-.25cm}\displaystyle  \sum_{\ell=k+1}^m  h_{i,1+(\ell-1)d}\big[n^2-1-6(-h^2-2kdh-k^2d^2+2(kd^2+dh)\ell\\[3ex]
\hspace{-.25cm}\displaystyle &\hspace{-.25cm}\displaystyle -d^2\ell^2-n(h+kd) + \ell dn)\big].
\end{array}
$$
Besides,
$$\begin{array}{rl}
\hspace{-.25cm}\displaystyle B_{1}=&\hspace{-.25cm}\displaystyle\sum\limits_{\ell=2}^{k}h_{i,1+(\ell-1)d}= -\frac {d(k-1)}{n}-\frac{a}{2c}\frac{1}{( T_m(q) -1)} \bigg[(i-1) \Big( U_{k-2}(q) +U_{m-k}(q)\\[3ex]
\hspace{-.25cm}\displaystyle &\hspace{-.25cm}\displaystyle-U_{m-1}(q) \Big) -\frac {c}{a} \Big(V_{k-1}(q)-V_{m-k}(q)+V_{m-1}(q)) -1\Big)\bigg],\\[3ex]
\hspace{-.25cm}\displaystyle B_{2}=&\hspace{-.25cm}\displaystyle\sum\limits_{\ell=2}^{k} \ell h_{i,1+(\ell-1)d)}= -\frac {d(k-1)(k+2)}{n}-\frac{a}{2c}\frac{1}{( T_m(q) -1)} \bigg[(i-1) \Big( (k+1)U_{k-2}(q) \\[3ex]
\hspace{-.25cm}\displaystyle +&\hspace{-.25cm}\displaystyle(k+1)U_{m-k}(q)-2U_{m-2}(q)-1-\frac{1}{2(q-1)}(V_{k-1}(q)-V_{m-k}(q)+V_{m-1}(q))\Big)
\\[3ex]
\hspace{-.25cm}-\displaystyle &\hspace{-.25cm}\displaystyle \frac {c}{a}\Big( k V_{k-1}(q) -kV_{m-k}(q)+V_{m-1}(q) -U_{k-2}(q)
-U_{m-k}(q)+U_{m-1}(q) \Big) \bigg],
\end{array}
$$
and
$$\begin{array}{rl}
\hspace{-.25cm}\displaystyle K_{1}=&\hspace{-.25cm}\displaystyle\sum\limits_{\ell=1}^{m}h_{i,1+(\ell-1)d}= 0,\\[3ex]
\hspace{-.25cm}\displaystyle K_{2}=&\hspace{-.25cm}\displaystyle\sum\limits_{\ell=2}^{k} \ell h_{i,1+(\ell-1)d)}=-
\frac{1}{(T_m(q) -1)} \bigg[\bigg( (i-1)\big(a-\frac{1}{d}-\frac{a}{2c}(m+2)\big)\frac{m+2}{2}\bigg)
 \\[3ex]
\hspace{-.25cm}\displaystyle &\hspace{-.25cm}\displaystyle \Big(V_{m-1}(q)-1\Big)+\Big(
\frac{a}{2c}(i-1)(m-1)-q+1\Big) U_{m-1}(q)\bigg]-\frac{m+1}{2},
\\[3ex]
\hspace{-.25cm}\displaystyle K_{3}=&\hspace{-.25cm}\displaystyle\sum\limits_{\ell=2}^{k} \ell^2 h_{i,1+(\ell-1)d)}=
-\frac{1}{(T_m(q) -1)} \bigg[-\bigg((i-1) \Big( \frac{m+3}{d} +\frac{anm}{2cd}+\frac{an}{cd}\Big)
\\[3ex]
\hspace{-.25cm}\displaystyle+&\hspace{-.25cm}\displaystyle\frac{m^2}{2}+m+1+\frac{2c}{ad}\bigg)
\Big(V_{m-1}(q)-1\Big)+\Big(\frac{a}{2c}(i-1)(m^2+2m-3)
\\[3ex]
\hspace{-.25cm}\displaystyle +&\hspace{-.25cm}\displaystyle m-q\Big)U_{m-1}(q)\bigg]-\frac{(m+1)(2m+1)}{2}.
\end{array}
$$
%%%%%%%%%
%%%%%%%%%
Now adding $A_1+A_2+A_3=$
$$\begin{array}{rl}
\hspace{-.25cm}\displaystyle =\frac{1}{12cn}&\hspace{-.25cm}\displaystyle(n^2-1) \sum_{\ell=1}^m  h_{i,1+(\ell-1)d}
-\frac{1}{2cn}h_{i,1}\left[-6(h+kd-d)(n-h-kd+d)\right]
\\[3ex]
\hspace{-.25cm}\displaystyle-\frac{1}{2cn}&\hspace{-.25cm}\displaystyle \sum_{\ell=2}^m h_{i,1+(\ell-1)d}(-h^2-2kdh-k^2d^2
+2(kd^2+dh)\ell-d^2\ell^2\\[3ex]
\hspace{-.25cm}\displaystyle-\frac{1}{2cn}&\hspace{-.25cm}\displaystyle \sum_{\ell=2}^k h_{i,1+(\ell-1)d}  n(h+kd) -\ell dn
+\frac{1}{2cn} \sum_{\ell=k+1}^m h_{i,1+(\ell-1)d}  n(h+kd) -\ell dn\\[3ex]
\hspace{-.25cm}\displaystyle=&\hspace{-.25cm}\displaystyle-\frac{1}{2cn}\Big[ h_{i,1}(h+kd-d)(n-h-kd+d)+(-h^2-2kdh-k^2d^2)(-h_{i,1})
\\[3ex]
+&\hspace{-.25cm}\displaystyle
2(kd^2+dh)(K_2-h_{i,1})-d^2(K_3-h_{i,1})+ n(h+kd) (2B_1+h_{i,1})
\\[3ex]
-&\hspace{-.25cm}\displaystyle
 dn(2B_2-K_2+h_{i,1})\Big]=-\frac{1}{2cn}\Big[2nh_{i,1}(kd+h-d)+ 2n(h+kd) B_1
\\[3ex]
\hspace{-.25cm}\displaystyle -&\hspace{-.25cm}\displaystyle 2nd B_2+(dn+2dh+2kd^2)K_2 -d^2 K_3\Big]
\\[3ex]
&\hspace{-.25cm}\displaystyle -\dfrac{1}{2cn}\dfrac{1}{T_m(q)-1}
\bigg[ n \left( h-i+1 \right)  \left( V_{k-1}(q)  -V_{m-k}(q) \right)
\\[3ex]
\hspace{-.25cm}\displaystyle- &\hspace{-.25cm}\displaystyle
  \Big(\frac{an}{c} ( i-1) ( h-d) -nd \Big)  \left( U_{k-2}(q) +U_{m-k}(q) \right)
 \\[3ex]
 \hspace{-.25cm}\displaystyle- &\hspace{-.25cm}\displaystyle
 \frac{2cd}{a} \left( V_{m-1}(q) -1\right) - d^2 U_{m-1}(q) \bigg] +dh+nh-nd-\frac{n^2}{6}+\frac{d^2}{6}
 \\[3ex]
 \hspace{-.25cm}\displaystyle +&\hspace{-.25cm}\displaystyle
 \left( 2\,kd-3\,d+2\,h-n \right)  \left( i-1\right) +kd \left( n-2\,h \right)-d^2(k-1)^2.
\end{array}$$
Finally, taking into account that $qU_{m-1}(q)-U_{m-2}(q)-1=T_{m}(q)-1$, we obtain the desired result.
\end{proof}
%%%%%%%%%
%%%%%%%%%
\begin{prop}\label{Prop.M}
The group inverse of the Schur complement of $\LL$ is a block Toeplitz matrix given by
$$
{\sf M}=
 \begin {bmatrix}{\sf M}_1 & {\sf M}_2  & \dots & {\sf M}_m\\[1ex]
 {\sf M}_m  & {\sf M}_1 & \ddots& \vdots\\[1ex]
\vdots & \ddots &\ddots&  {\sf M}_2  \\[1ex]
{\sf M}_2  & \dots & {\sf M}_m & {\sf M}_1
 \end {bmatrix}
 $$
where each submatrix ${\sf M}_k$ has as elements
$$\begin{array}{rl}
\hspace{-.25cm}({\sf M}_k)_{i,h+1}=
&\hspace{-.25cm}\displaystyle \dfrac{1}{2cn}\Bigg(-\left| i-1-d(k-1)-h\right|\cdot(n-\left| i-1-d(k-1)-h\right|)
\\[3ex]
+&\hspace{-.25cm}\displaystyle\dfrac{1}{T_m(q)-1}
\bigg[ n \left( h-i+1 \right)  \left( V_{k-1}(q)  -V_{m-k}(q) \right)
\\[3ex]
\hspace{-.25cm}\displaystyle- &\hspace{-.25cm}\displaystyle
  \Big(\frac{an}{c} ( i-1) ( h-d) -nd \Big)  \left( U_{k-2}(q) +U_{m-k}(q) \right)
 \\[3ex]
 \hspace{-.25cm}\displaystyle- &\hspace{-.25cm}\displaystyle
 \frac{2cd}{a} \left( V_{m-1}(q) -1\right) - d^2 U_{m-1}(q) \bigg] +dh+nh-nd
 \\[3ex]
 \hspace{-.25cm}\displaystyle +&\hspace{-.25cm}\displaystyle
 \left( 2\,kd-3\,d+2\,h-n \right)  \left( i-1\right) +kd \left( n-2\,h \right)-d^2(k-1)^2\Bigg)
 \\[3ex]
 \hspace{-.25cm}\displaystyle +&\hspace{-.25cm}\displaystyle
 \dfrac{d^2-1}{12cn},
\end{array}$$
with $q=\dfrac{ad}{2c}+1$, for any $k=1,\dots,m$, $i=1,\dots,d$ and $h=0,\dots,d-1$.
\end{prop}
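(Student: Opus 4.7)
The starting point is the identity $\MM = \GG-\GG\FF\GG$ established in step (v) of the scheme in the proof of Theorem \ref{theo.Main}. Writing ${\sf K}=\GG\FF\GG$, whose explicit block Toeplitz form is the content of Lemma \ref{lemma.K}, we have simply
\begin{equation*}
\MM = \GG - {\sf K}.
\end{equation*}
The proposition therefore reduces to the entrywise verification
\begin{equation*}
(\MM_k)_{i,h+1} = (\GG)_{i,j} - ({\sf K}_k)_{i,h+1}, \qquad j=1+(k-1)d+h.
\end{equation*}

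First I would observe that $\MM$ inherits the block Toeplitz structure from both summands. Indeed, $\GG$ is the group inverse of a circulant Laplacian and is itself circulant; since $n=md$, grouping its entries into $d\times d$ blocks gives a block circulant (hence block Toeplitz) matrix of the required shape. Lemma \ref{lemma.K} establishes the same structure for ${\sf K}$. Consequently it suffices to compute the first $d$ rows of $\MM$, arranged into the $m$ blocks $\MM_1,\dots,\MM_m$.

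The second step is the direct algebraic substitution. I would write
\begin{equation*}
(\GG)_{i,j} = \frac{n^2-1}{12cn} - \frac{|i-j|(n-|i-j|)}{2cn},
\end{equation*}
with $j=1+(k-1)d+h$, and subtract the expression for $({\sf K}_k)_{i,h+1}$ provided by Lemma \ref{lemma.K}. The $|i-j|$-piece of $\GG_{i,j}$ produces, with a change of sign, the leading contribution $-|i-1-k(d-1)-h|(n-|i-1-k(d-1)-h|)$ in the statement; the Chebyshev block of ${\sf K}_k$ changes sign and becomes the $+\frac{1}{T_m(q)-1}[\cdots]$ term; and the polynomial tail of $-{\sf K}_k$ supplies the remaining summands $dh+nh-nd$, $(2kd-3d+2h-n)(i-1)$, $kd(n-2h)$ and $-d^2(k-1)^2$. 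Finally, the constant piece $\frac{n^2-1}{12cn}$ extracted from $\GG_{i,j}$ combines with the constants $-\tfrac{n^2}{6}+\tfrac{d^2}{6}$ arising from $-{\sf K}_k$ to yield precisely the additive tail $\frac{d^2-1}{12cn}$ displayed at the end of the Proposition.

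The main obstacle is purely bookkeeping: one must collate several dozen polynomial and Chebyshev-polynomial contributions from two long expressions, and confirm that no residual term in $n^2$, $d^2$ or their cross products survives the cancellation. No further Chebyshev identity beyond those already invoked in Lemmas \ref{lemma.F}--\ref{lemma.K} is needed, and no new structural argument enters: the proof is essentially a collation of quantities already computed in the earlier technical lemmas.
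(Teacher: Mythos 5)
Your proposal is correct and follows exactly the route the paper intends: Proposition~\ref{Prop.M} is stated without a separate proof and is meant to be obtained, as in steps (v)--(vii) of the proof of Theorem~\ref{theo.Main}, by the entrywise subtraction $\MM=\GG-{\sf K}$ with ${\sf K}=\GG\FF\GG$ taken from Lemma~\ref{lemma.K} and $\GG$ the circulant Green matrix of the cycle, the block Toeplitz structure being inherited from both summands. Your bookkeeping of the cancellation (the Chebyshev bracket flipping sign and the constants $\frac{n^2-1}{12cn}$ and $\frac{d^2-n^2}{12cn}$ merging into $\frac{d^2-1}{12cn}$) is precisely the computation the paper leaves implicit.
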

\begin{lemma} \label{LemmaSG}
The product $\GG\s={\sf n} \otimes {\sf j}_m$ is a column vector where
 $$
({\sf n})_{i}=-\frac{a}{12cn}\Big(n(6+d)+5m+6mi(i-d-2)\Big), \; \textrm{for any}\;\; i=1,\dots,d.
 $$
Besides, it holds $\MM\s=\GG\s.$
\end{lemma}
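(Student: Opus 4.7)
The plan is to exploit the rotational symmetry of the base cycle $C_n$ to reduce $\GG\s$ to a vector depending on only $d$ independent entries, to compute these entries by direct summation, and finally to deduce $\MM\s=\GG\s$ from the same symmetry.

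First, I would observe that $\GG$ is circulant (being the group inverse of the circulant Laplacian of $C_n$), so it commutes with the cyclic shift $T^d$ by $d$ positions. The support of $\s$ is the set of hub vertices $\{1+(k-1)d:\,k=1,\dots,m\}$, which is invariant under $T^d$; hence $T^d\s=\s$ and therefore $T^d(\GG\s)=\GG(T^d\s)=\GG\s$. Thus the entries of $\GG\s$ satisfy $(\GG\s)_j=(\GG\s)_{j+d}$ for every $j$ (indices modulo $n$), so $\GG\s$ has exactly $d$ independent entries $(\GG\s)_1,\dots,(\GG\s)_d$. This is precisely the Kronecker decomposition $\GG\s={\sf n}\otimes{\sf j}_m$ with ${\sf n}\in\reals^d$ and $({\sf n})_i=(\GG\s)_i$.

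Next, inserting the explicit formula for the Green function of the cycle recalled in Section~1,
$$
(\GG\s)_i=-\frac{a}{12cn}\sum_{k=1}^{m}\Bigl(n^2-1-6\,|i-1-(k-1)d|\bigl(n-|i-1-(k-1)d|\bigr)\Bigr).
$$
For $1\le i\le d$ one has $|i-1-(k-1)d|=i-1$ when $k=1$ and $(k-1)d+1-i$ when $k\ge 2$. Substituting, expanding, and using $\sum_{k=1}^{m-1}k=\tfrac{(m-1)m}{2}$, $\sum_{k=1}^{m-1}k^2=\tfrac{(m-1)m(2m-1)}{6}$, together with the constraint $n=md$, the sum collapses to the closed form $-\frac{a}{12cn}\bigl(n(6+d)+5m+6mi(i-d-2)\bigr)$ claimed in the statement.

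Finally, since $\MM=\GG-\GG{\sf \Pi}(\II+{\sf \Pi}^\top\GG{\sf \Pi})^{-1}{\sf \Pi}^\top\GG$, to obtain $\MM\s=\GG\s$ it suffices to verify ${\sf \Pi}^\top\GG\s=\mathbf{0}$. Every column of ${\sf \Pi}$ is a scalar multiple of a dipole ${\sf e}_{1+(i-1)d}-{\sf e}_{1+(j-1)d}$ between two hub vertices, so each entry of ${\sf \Pi}^\top\GG\s$ is proportional to a difference $(\GG\s)_{1+(i-1)d}-(\GG\s)_{1+(j-1)d}$. By the first paragraph all hub vertices lie in the same $T^d$--orbit (the orbit of vertex $1$), so $(\GG\s)_{1+(k-1)d}=({\sf n})_1$ is independent of $k$ and each such difference vanishes. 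The main obstacle is the bookkeeping in the middle paragraph: the structural arguments in the first and third paragraphs are very short, whereas the closed-form evaluation of the sum requires careful handling of the absolute values and the collapse of the resulting polynomial sums in $k$ under the identity $n=md$.
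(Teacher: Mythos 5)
Your proposal is correct, and its core is the same as the paper's: the explicit entries $({\sf n})_i$ are obtained exactly as in the paper, by summing the cycle's Green function over the hub columns and simplifying with $n=md$ (your reduction via shift--invariance of $\GG$ under $T^d$ is just a rephrasing of the paper's observation that $\GG$ is block Toeplitz, so $(\GG\s)_i=(\GG\s)_{i+(k-1)d}$; note the paper's notation ${\sf n}\otimes{\sf j}_m$ is really this $d$--periodic arrangement). The only genuine difference is the last step: the paper proves $\MM\s=\GG\s$ from the reduced expression $\MM=\GG-\GG\FF\GG$, asserting that $\FF\GG\s=0$ is ``straightforward to verify'' (which ultimately rests on the structure of $\FF$ from Lemma \ref{lemma.F} and the identity $\sum_{j}f_j=0$), whereas you work directly from the definition $\MM=\GG-\GG{\sf \Pi}(\II+{\sf \Pi}^\top\GG{\sf \Pi})^{-1}{\sf \Pi}^\top\GG$ and check ${\sf \Pi}^\top\GG\s=0$ because every column of ${\sf \Pi}$ is a dipole between hub vertices and $\GG\s$ is constant on the hub positions. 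Your variant is more self-contained (it needs neither $\FF$ nor its entries) and makes the symmetry reason for the cancellation explicit, at the cost of not reusing the machinery already set up for the main theorem; both arguments are equally valid.
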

%%%%%%%%%
\begin{proof} For a given row $i=1,\dots,d$, we have
$$\begin{array}{rl}
\hspace{-.25cm}(\GG\s)_i=
&\hspace{-.25cm}\displaystyle
-a\sum_{k=1}^m g_{i,1+(k-1)d}=-ag_{i,1}-a\sum_{k=2}^m g_{i,1+(k-1)d}=-\frac{a}{12cn}(n^2-1)m
\\[3ex]
-&\hspace{-.25cm}\displaystyle
\frac{a}{2cn}\bigg((i-1)(n-i+1)+\sum_{k=2}^m (1+(k-1)d-i)(n-1-(k-1)d+i)\bigg)
\\[3ex]
\hspace{-.25cm}\displaystyle
=&\hspace{-.25cm}\displaystyle
\frac{-a}{12cn}\Big(6n+5m+dn-6mi(d+2-i) \Big).
 \\[3ex]
\end{array}$$
Moreover, as $\GG$ is a block Toeplitz matrix, it holds $\d (\GG\s)_i= (\GG\s)_{i+(k-1)d}$, for any $k=1,\dots,m$.

Besides,
$\MM\s=(\GG-\GG \FF \GG)\s=\GG\s-\GG \FF \GG\s=\GG\s$, as it is
straightforward to verify that $ \FF \GG\s=(0,\dots,0)^\top$.
\end{proof}

%\section{Acknowledgement}
%The author thanks A. Carmona, A.M. Encinas and M. Mitjana for their thoughtful comments and suggestions.
%

%% References
%%
%% Following citation commands can be used in the body text:
%% Usage of \cite is as follows:
%%   \cite{key}          ==>>  [#]
%%   \cite[chap. 2]{key} ==>>  [#, chap. 2]
%%   \citet{key}         ==>>  Author [#]

%% References with bibTeX database:

%\bibliographystyle{model1-num-names}
%\bibliography{<your-bib-database>}

\begin{thebibliography}{99}

%
\bibitem{BG03} {\small {\sc A. Ben-Israel, T.N.E. Greville:}
{\it Generalized inverses. Theory and applications.}
2nd Edition,  Springer, New York, 2003.}

%
\bibitem{BHR14}
 {\small {\sc N.M. Boffi, J.C. Hill, M.G. Reuter:}
{\it Characterizing the inverses of block tridiagonal, block Toeplitz matrices.}
Comput. Sci. Disc. {\bf 8} 015001 (2015).}

%
\bibitem{BJR13} {\small {\sc  M.C. Golumbic, M. Stern, A. Levy, G. Morgenstern:} {\it Graph-Theoretic Concepts in Computer Science: 38th International Workshop, WG 2012, Jerusalem, Israel, June 26-28, 2012, Revised Selected Papers.} Springer, 2012 Edition.}

%
\bibitem{BCEG08}
{\small {\sc E. Bendito, A. Carmona, A.M. Encinas, J.M. Gesto:} 
{\it A Formula for the Kirchhoff Index.}
Int. J. Quantum Chem., {\bf 108} (2008), 1200--1206.}

%
\bibitem{BCEM10}
{\small {\sc E. Bendito, A. Carmona, A.M. Encinas, M. Mitjana:} {\it Generalized inverses of symmetric M--matrices.}
Linear Algebra Appl., {\bf 432} (2010), 2438--2454.}

%
\bibitem{BCEG09}
{\small {\sc E. Bendito, A. Carmona, A.M. Encinas, J.M. Gesto:} {\it Characterization of symmetric $M$-matrices as resistive inverses.}
Linear Algebra Appl., {\bf 430} (2009), 1336--1349.}

%
\bibitem{CEGJM15}
{\small {\sc A. Carmona, A.M. Encinas, S. Gago, M.J. Jim\'enez, M. Mitjana:} {\it The inverse of some circulant matrices.}
Appl. Math. Comput., {\bf 270} (2015), 785--793.}

%
\bibitem{CEGM16}
{\small {\sc A. Carmona, A.M. Encinas, S. Gago, M. Mitjana:} {\it Green operators of networks with a new vertex.}
Linear Algebra Appl., {\bf 491} (2016), 419--433.}


%
\bibitem{FP01}
{\small {\sc C.M. da Fonseca, J. Petronilho:} 
{\it Explicit inverses of some tridiagonal matrices.}
Linear Algebra Appl.,  {\bf 325} (2001), 7--21.}


%
\bibitem{LHLL10}
{\small {\sc H.-B. Li, T.-Z. Huang, X.-P. Liu, H. Li:}
 {\it On the inverses of general tridiagonal matrices}
Linear Algebra Appl.,  {\bf 433} (2010), 965--983.}

%
\bibitem{M92}
{\small {\sc G. Meurant:} {\it  A review on the inverse of symmetric tridiagonal and block tridiagonal matrices.}
SIAM
J. Matrix Anal. Appl., {\bf 13} (1992), 707--728.}


%
\bibitem{Wo50} {\small {\sc M.A. Woodbury} 
{\it Inverting modified matrices},
Statistical Research Group, Memo. Rep. no. 42,
Princeton University, Princeton, N. J, 1950. } \end{thebibliography}

%% Authors are advised to submit their bibtex database files. They are
%% requested to list a bibtex style file in the manuscript if they do
%% not want to use model1-num-names.bst.

%% References without bibTeX database:

\section*{References}

\end{document}